\pdfoutput=1 

\RequirePackage[l2tabu, orthodox]{nag}

\documentclass[12pt,reqno]{amsart}
\usepackage[margin=1in]{geometry} 
\usepackage{url,amssymb,stmaryrd,enumerate,colonequals,graphicx}
\usepackage[all]{xy} 
\usepackage{mathrsfs} 
\usepackage{comment} 

\usepackage[T1]{fontenc}
\usepackage[utf8]{inputenc}

\usepackage[dvipsnames,xcdraw]{xcolor}

\newcommand{\deficolor}{ForestGreen}
\newcommand{\defi}[1]{\textbf{\color{\deficolor} #1}} 

\newcommand{\C}{\mathbb{C}}
\newcommand{\F}{\mathbb{F}}

\newcommand{\PP}{\mathbb{P}}
\newcommand{\Q}{\mathbb{Q}}

\newcommand{\Z}{\mathbb{Z}}
\newcommand{\Qbar}{{\overline{\Q}}}
\newcommand{\Zhat}{{\hat{\Z}}}

\newcommand{\kbar}{{\overline{k}}}

\newcommand{\calC}{\mathcal{C}}

\newcommand{\calH}{\mathcal{H}}

\newcommand{\cc}{\mathfrak{c}}
\newcommand{\GG}{\mathfrak{G}}

\DeclareMathOperator{\Aut}{Aut}
\DeclareMathOperator{\divv}{div}
\DeclareMathOperator{\Gal}{Gal}

\DeclareMathOperator{\HH}{H}
\DeclareMathOperator{\Hom}{Hom}
\DeclareMathOperator{\Inn}{Inn}
\DeclareMathOperator{\Ni}{Ni}
\DeclareMathOperator{\Spec}{Spec}
\DeclareMathOperator{\Stab}{Stab}

\newcommand{\isom}{\simeq}
\newcommand{\isomto}{\stackrel{\sim}\to}
\newcommand{\surjects}{\twoheadrightarrow}

\newcommand{\arith}{\mathrm{arith}}
\newcommand{\et}{\mathrm{\acute{e}t}}
\newcommand{\geom}{\mathrm{geom}}

\newtheorem{theorem}{Theorem}[section]
\newtheorem{lemma}[theorem]{Lemma}
\newtheorem{corollary}[theorem]{Corollary}
\newtheorem{proposition}[theorem]{Proposition}

\theoremstyle{definition}

\newtheorem{example}[theorem]{Example}

\theoremstyle{remark}
\newtheorem{remark}[theorem]{Remark}

\makeatletter
\g@addto@macro\bfseries{\boldmath} 
\makeatother

\usepackage{microtype}  

\usepackage[
	pagebackref,
	pdfauthor={Xiaoyu Huang, Blake Jackson, Kyu-Hwan Lee, Bjorn Poonen, Rachel Pries, Shaowu Zhang}, 
	pdftitle={The Mathieu group M23 is a Galois group over Q},
]{hyperref}

\begin{document}

\title{The Mathieu group $M_{23}$ is a Galois group over $\Q$}
\subjclass[2020]{Primary 12F12; Secondary 11G32, 11R32, 12F10, 14H30, 14Q05}
\keywords{Inverse Galois problem, Mathieu group, sporadic group, Belyi map, rigidity method}



\author[X. Huang]{Xiaoyu Huang}
\address{Department of Mathematics, Temple University, Philadelphia, PA 19122, USA} \email{xiaoyu.huang@temple.edu}
\urladdr{\url{https://sites.google.com/view/xiaoyuhuang/home}}

\author[B. Jackson]{Blake Jackson}
\address{Institute for Computer-Aided Reasoning in Mathematics, Carnegie Mellon University, Pittsburgh, PA 15213, USA}
\email{jackson@icarm.io}
\urladdr{\url{https://www.blakejacksonmath.com/}}

\author[K.-H. Lee]{Kyu-Hwan Lee}
\address{Department of Mathematics, University of Connecticut, Storrs, CT 06269, USA \hfill \break \indent Korea Institute for Advanced Study, Seoul 02455, Republic of Korea}
\email{khlee@math.uconn.edu}
\urladdr{\url{https://khlee-math.github.io/}}

\author[B. Poonen]{Bjorn Poonen}
\address{Department of Mathematics, Massachusetts Institute of Technology, Cambridge, MA 02139-4307, USA}
\email{poonen@math.mit.edu}
\urladdr{\url{http://math.mit.edu/~poonen/}}

\author[R. Pries]{Rachel Pries}
\address{Department of Mathematics, Colorado State University, Fort Collins, CO 80523, USA}
\email{pries@colostate.edu}
\urladdr{\url{http://math.colostate.edu/~pries/}}

\author[S. Zhang]{Shaowu Zhang}
\address{Department of Mathematics, California Institute of Technology, Pasadena, CA 91125, USA}
\email{szhang7@caltech.edu}
\urladdr{\url{https://www.shaowuzhang.com/}}

\thanks{X.H.\ was supported by an AMS--Simons Travel Grant. B.J.\ was supported by ICARM through NSF grant DMS-2425401. B.P.\ was supported in part by NSF grant DMS-2601551 and Simons Foundation grant \#402472.
R.P.\ was supported by NSF grant DMS-2200418.}

\date{August 8, 2026}

\begin{abstract}
Researchers studying the inverse Galois problem realized 25 of the 26 sporadic finite simple groups 
as Galois groups over $\Q$ during 1984--1989.
We complete this program by proving that the last remaining sporadic group, the Mathieu group $M_{23}$, 
occurs as a Galois group over $\Q$.
In fact, we produce an explicit degree $23$ polynomial with rational 
coefficients whose splitting field has Galois group $M_{23}$ over $\Q$. 
To accomplish this, we use a non-rigid triple of conjugacy classes of $M_{23}$ and compute Belyi maps to construct an explicit regular Galois extension of $\Q(t)$ with Galois group $M_{23}$.
Essential for our computation is the numerical Belyi map algorithm developed and implemented by Klug, Musty, Schiavone, Sijsling, and Voight, inspired by ideas of Hejhal and Stark.
\end{abstract}

\maketitle

\section{Introduction}\label{S:introduction}

\subsection{The inverse Galois problem}
\label{S:inverse Galois problem}

The inverse Galois problem asks whether or not every finite group $G$ occurs as the Galois group of 
some finite Galois extension of $\Q$.
In particular, one can ask the question for finite simple groups, which fall into $18$ infinite families and $26$ ``sporadic'' groups.
During 1984--1989, $25$ of the $26$ sporadic groups were realized as Galois groups over $\Q$.\footnote{Strictly speaking, the baby monster $B$ was realized only later, since there was a calculation error in the original construction of \cite{Hunt1986}, according to \cite[p.~161]{Malle-Matzat2018}.}
But the last, the Mathieu group $M_{23}$, remained unrealized until now, despite many attempts; see Section~\ref{S:previous}.

\begin{theorem}
\label{T:over Q}
There exists a Galois extension of $\Q$ with Galois group $M_{23}$.
\end{theorem}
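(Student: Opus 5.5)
The plan is to construct a regular Galois extension of $\Q(t)$ with group $M_{23}$ and then specialize using Hilbert's irreducibility theorem. First I choose a generating triple of conjugacy classes $(C_1,C_2,C_3)$ of $M_{23}$ for which the Nielsen class $\Ni(C_1,C_2,C_3)$ of triples $(g_1,g_2,g_3)$ with $g_i\in C_i$, $g_1g_2g_3=1$ and $\langle g_1,g_2,g_3\rangle=M_{23}$ is small modulo $\Inn(M_{23})$. Classical rigidity fails for $M_{23}$: every rational rigid triple has been ruled out, and the classes of elements of order $11$, $14$, $15$, $23$ are not rational. So I will accept a non-rigid triple, say with $\#\Ni/\Inn$ small, and compute the orbit of the absolute Galois group on the corresponding genus-$0$ Belyi maps $\PP^1\to\PP^1$ of degree $23$, coming from the permutation action on $23$ points. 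Using the character table, I will count the triples and then list them explicitly in \texttt{Magma}/GAP as permutation triples $(\sigma_0,\sigma_1,\sigma_\infty)$ in $S_{23}$ generating $M_{23}$. I will take a triple with $\sigma_\infty$ of cycle type $[1,1,\dots]$ chosen so that the cover has genus $0$, which is needed for the degree-$23$ polynomial description.

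Next, for each permutation triple I compute the Belyi map numerically. I use the Klug--Musty--Schiavone--Sijsling--Voight algorithm: expand power series on the hyperbolic triangle group $\Delta(a,b,c)$ and solve the linear system from the Hejhal/Stark method, which gives the cover to high precision (thousands of digits). I then recognize the coefficients as algebraic numbers with LLL and obtain an exact model $f(x)=t$ over a number field $K$. The number field $K$ is the field of moduli, whose Galois orbit is the set of conjugate triples. The key point is to find a triple, or a branch of the Galois orbit structure, whose field of moduli is $\Q$. If the Galois orbit of the Nielsen class splits so that some cover is fixed by $\Gal(\Qbar/\Q)$, the cover is defined over $\Q$. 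The reason is that $M_{23}$ has trivial center and the degree-$23$ cover has a rational point structure ensuring that the field of moduli equals a field of definition; the genus-$0$ source has a rational point, which I can arrange, for example by using a fixed point of a uniquely determined ramification point. If no single cover is rational, I will instead compose with a base change $t\mapsto h(t)$ or pass to a Hurwitz-space fiber. That fallback is the step I expect to be the main obstacle: finding a class triple whose Galois orbit has a $\Q$-rational member and carrying out the high-precision numerical computation reliably.

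Once I have an explicit $F(x,t)=\mathrm{num}(f(x))-t\,\mathrm{den}(f(x))\in\Q(t)[x]$ of degree $23$, I verify rigorously, without relying on numerics, that its Galois group over $\Qbar(t)$ and over $\Q(t)$ is $M_{23}$. The geometric monodromy is certified by checking the exact ramification data and that the monodromy generates a transitive group containing the listed cycle types. By Jordan-type and classification arguments, a transitive subgroup of $S_{23}$ containing a $23$-cycle and an element of the prescribed type that is neither $A_{23}$ nor $S_{23}$ must be $M_{23}$ or solvable. I rule out $A_{23}$ by specializing $t$ and factoring modulo primes, which yields only cycle types occurring in $M_{23}$, together with a bound on the number of them. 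Because the geometric group $M_{23}$ is normal in the arithmetic group, which lies in $N_{S_{23}}(M_{23})=M_{23}$, the extension is regular. Finally, Hilbert irreducibility, or a concrete specialization $t=t_0\in\Q$ checked via Frobenius cycle types and a discriminant computation, gives a degree-$23$ polynomial over $\Q$ with Galois group $M_{23}$. This proves Theorem~\ref{T:over Q}.
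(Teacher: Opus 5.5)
\textbf{Gap 1: the class configuration.} The proposal never fixes the configuration on which everything depends, and the constraints you do impose would exclude the one that works.

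First, you normalize to Belyi maps branched at $0,1,\infty$ and look for one fixed by $\GG_\Q$. These branch points are $\Q$-rational and $N_{S_{23}}(M_{23})=M_{23}$. So the branch cycle argument forces each local monodromy class to be a rational class of $M_{23}$, i.e.\ one of $2,3,4,5,6,8$. The classes $23A,23B$ (likewise $7A/7B$, $11A/11B$, $14A/14B$, $15A/15B$) can never be the monodromy at a $\Q$-rational branch point of a cover defined over $\Q$. The paper instead uses the non-rational branch locus $S=\{0,\pm\sqrt{-23}\}$ with $0\mapsto 2$ and $\pm\sqrt{-23}\mapsto 23A,23B$, which is $\GG_\Q$-equivariant. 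The Belyi map $u$ branched at $1,0,\infty$ lives only over $K=\Q(\sqrt{-23})$. What descends to $\Q$ is $t=\alpha\circ u$, where $\alpha$ is the M\"obius map over $K$ sending $1,0,\infty$ to $0,\sqrt{-23},-\sqrt{-23}$. Your fallback of composing with some $t\mapsto h(t)$ is not this idea.

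Second, genus $0$ is not needed to get a degree-$23$ polynomial. Any $v$ generating $\Q(X)$ over $\Q(t)$ has a degree-$23$ minimal polynomial $F(t,V)$. In the paper, Riemann--Hurwitz with cycle types $1^7 2^8$, $23$, $23$ gives $X$ genus $4$. Your genus-$0$ requirement therefore excludes $\{2,23A,23B\}$. That is the $\GG_\Q$-stable multiset with smallest positive Nielsen class, $\lvert\Ni_\cc\rvert=7$, and it is where the paper finds, by explicit computation, a $\GG_\Q$-fixed member. The theorem rests on exactly that fact. Since you leave the triple unspecified and call the existence of a rational member ``the main obstacle,'' the proposal is a search strategy rather than a proof.

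\textbf{Gap 2: certifying the monodromy group.} Frobenius cycle types from reductions modulo primes give only \emph{lower} bounds on a Galois group. Seeing only cycle types that occur in $M_{23}$ cannot exclude $A_{23}$ without an effective Chebotarev bound, which is impractical unconditionally. You need an upper-bound certificate.

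In the paper, local monodromy plus the classification of transitive groups of degree $23$ first give $G_\geom\in\{M_{23},A_{23},S_{23}\}$ up to conjugacy. The upper bound then comes from reducing modulo $31$:
\begin{itemize}
\item the locus $t(t^2+23)h_{84}$ stays \'etale and the cover is tame there;
\item Grothendieck's specialization theorem for tame fundamental groups gives $G_\geom=G_{\geom,31}\subseteq G_{\arith,31}$;
\item Magma certifies $G_{\arith,31}=M_{23}$.
\end{itemize}
Your final step (specializing $t_0$ and reading Frobenius cycle types) is fine once $G_\arith=M_{23}$ is established. At that point the specialized group is automatically a subgroup of $M_{23}$, so only lower bounds are needed.
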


\begin{example}
\label{Ex:M23 over Q}
The splitting field of 
\begin{align*}
    f(x) \colonequals x^{23} &- 184 x^{21} - 1150 x^{20} + 26151 x^{19} + 18400 x^{18} - 1808490 x^{17} \\
      &+ 1545462 x^{16} + 67672923 x^{15} - 42732528 x^{14} - 1333395744 x^{13} \\
      &+ 290615166 x^{12} + 10550424369 x^{11} + 3700476348 x^{10} + 35123826654 x^9 \\
      &- 194398310718 x^8 - 1023887308293 x^7 + 3961650395556 x^6 \\
      &+ 1949980486716 x^5 - 28142323927002 x^4 + 53599151839311 x^3 \\
      &- 46185312415788 x^2 + 19169943578802 x - 3150159884154
\end{align*}
is a Galois extension of $\Q$ with Galois group $M_{23}$, unramified outside $\{2,3,23\}$. We certified these claims with Magma~\cite{Magma}.\footnote{Our code is available at \url{https://github.com/shaowuz/m23isgalois}.}
\end{example}



\subsection{Regular Galois extensions}

A \defi{$G$-extension} of a field $k$ is a Galois field extension $L/k$, together with an isomorphism 
$\Gal(L/k) \isom G$.
A finite extension of $k(t)$ is \defi{regular} if it contains no nontrivial algebraic extension of $k$.
If $G \ne \{1\}$ and a regular $G$-extension of $\Q(t)$ exists,
then, by Hilbert's irreducibility theorem, 
specializing $t$ to different rational numbers yields infinitely many $G$-extensions of $\Q$.
Thus, to prove Theorem~\ref{T:over Q}, we prove the following stronger result. 

\begin{theorem}
\label{T:over Q(t)}
There exists a regular Galois extension $L/\Q(t)$ with Galois group $M_{23}$.
\end{theorem}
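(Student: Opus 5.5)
We will realize $M_{23}$ via a genus-$0$ three-point cover, i.e.\ a Belyi map, following the route announced in the abstract.

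\textbf{Choosing the ramification data.} First we choose a triple of conjugacy classes $(C_1,C_2,C_3)$ of $M_{23}$. We look for triples $(\sigma_1,\sigma_2,\sigma_3)$ with $\sigma_i \in C_i$, $\sigma_1\sigma_2\sigma_3=1$, and $\langle \sigma_1,\sigma_2,\sigma_3\rangle = M_{23}$. We want the action on $23$ points to have genus $0$ by Riemann--Hurwitz, so that the degree-$23$ subcover is $\PP^1\to\PP^1$ and is given by a rational function. The natural candidates use classes of orders such as $2$, $4$, $23$ or $2$, $8$, $23$.

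\textbf{Enumerating the Nielsen class.} Next we compute the Nielsen class $\Ni(C_1,C_2,C_3)$ modulo $\Inn(M_{23})=M_{23}$, using character theory (the structure constant formula) together with an explicit enumeration in Magma. Since $M_{23}$ is complete, i.e.\ $\Aut(M_{23})=M_{23}$ and the center is trivial, each Nielsen class element gives a $\Qbar$-cover with group $M_{23}$. We arrange the classes to be rational: the $23$-cycle classes $23A$ and $23B$ are not rational, so we pick a triple whose field of definition is controlled. Because the triple is non-rigid (several covers), the covers are permuted by $\Gal(\Qbar/\Q)$. The goal is to find an orbit in which some cover has field of moduli $\Q$. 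The existence of rational points on the cover $\PP^1$ and the trivial center then give a field of definition $\Q$ for the $G$-cover, since the obstruction vanishes when $Z(G)=1$.

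\textbf{Computing and certifying the cover.} We compute the Belyi maps numerically with the Klug--Musty--Schiavone--Sijsling--Voight algorithm. We recognize the coefficients algebraically by LLL, obtaining a rational function $\phi(x)\in\Q(x)$ of degree $23$, or one defined over a small number field that we then descend to $\Q$. Setting $F(x,t)=\mathrm{num}(\phi(x))-t\,\mathrm{den}(\phi(x))$, we certify exactly the following:
\begin{enumerate}[(i)]
\item the ramification is exactly over $0,1,\infty$ with the prescribed cycle types;
\item the geometric monodromy group is $M_{23}$, which can be checked by computing the Galois group of a specialization, as in Example~\ref{Ex:M23 over Q}, giving a subgroup of $M_{23}$ containing elements that force equality by the maximal subgroup list;
\item the arithmetic monodromy group equals the geometric one.
\end{enumerate}
Item (iii) holds because the normalizer of $M_{23}$ in $S_{23}$ is $M_{23}$ itself, so the splitting field $L$ of $F$ over $\Q(t)$ is regular.

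\textbf{Main obstacle.} The main obstacle is the computation itself. The degree is large, the Nielsen class is non-rigid, and the covers come in Galois orbits defined over number fields. Obtaining enough numerical precision to recognize coefficients of enormous height, and then finding a model with rational coefficients, is the hard step. To get there we would first try reducing the height by a Möbius change of the variable $x$, chosen so that ramification points lie at convenient positions.
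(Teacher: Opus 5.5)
There is a genuine gap at the heart of the argument: the choice of ramification data. You put the branch points at $0,1,\infty$. These are $\Q$-rational, so by the branch cycle argument every local monodromy class must be $\GG_{\Q}$-stable if the cover is to have field of moduli $\Q$. But your candidates $(2,4,23)$ and $(2,8,23)$ each contain a single class of $23$-cycles, and $\GG_{\Q}$ swaps $23A$ and $23B$ through $\Gal(\Q(\sqrt{-23})/\Q)$. Hence any such cover has field of moduli containing $\Q(\sqrt{-23})$. This is exactly why Elkies' genus-$0$ polynomial covers, which have type $(2,4,23)$, live over a quartic field rather than over $\Q$. Moreover, $(2,8,23)$ is not genus $0$: the indices are $8+18+22=48$, which gives genus $2$. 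Saying you will ``pick a triple whose field of definition is controlled'' does not resolve this, and you exhibit no rational genus-$0$ generating triple.

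The missing idea is to use the $\GG_{\Q}$-stable multiset $\{2,23A,23B\}$ and put the two $23$-classes over the conjugate points $\pm\sqrt{-23}$, so that $S=\{0,\pm\sqrt{-23}\}$. The price is that the degree-$23$ quotient then has genus $4$ (indices $8+22+22$), so there is no rational function $\phi(x)$ to recognize. Instead one must:
\begin{enumerate}[(a)]
\item compute a canonical model $P=Q=0$ in $\PP^3$ over $K=\Q(\sqrt{-23})$;
\item construct $t$ by a Riemann--Roch computation;
\item find a second function $v$ with $\Q(t,v)$ a $\Q$-form of the function field.
\end{enumerate}
Even then, nothing in the method guarantees a $\GG_{\Q}$-fixed cover among the seven in $\Ni_\cc$; the paper finds one computationally.

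Your step (ii) is also logically backwards. The Galois group of a specialization $F(t_0,x)$ is, up to conjugacy, a subgroup of $G_{\arith}$, so it gives only a lower bound and cannot show $G_{\geom}\subseteq M_{23}$. The cycle types likewise give only the lower bound $M_{23}\subseteq G_{\geom}$ (up to conjugacy), via the classification of transitive groups of degree $23$. You need an independent upper bound. The paper gets it by reducing modulo $31$: the branch locus stays \'etale and the cover is tame, so Grothendieck's specialization theorem for tame fundamental groups gives $G_{\geom}=G_{\geom,31}\subseteq G_{\arith,31}$, and Magma certifies $G_{\arith,31}=M_{23}$. Your step (iii), that $M_{23}$ is self-normalizing in $S_{23}$, is correct and matches the paper.
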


\subsection{Previous work}
\label{S:previous}

The rigidity method, originally developed in \cite{Shih1974,Fried1977,Belyi1979,Matzat1979,Matzat1984,Matzat1985a,Thompson1984},
was instrumental for constructing regular Galois extensions of $\Q(t)$.
Variants of this method, often involving significant new ideas such as the braid group method of \cite{Matzat1989,Matzat1991,Fried-Voelklein1991}, were used to construct regular $G$-extensions of $\Q(t)$ for all the sporadic groups $G$ except $M_{23}$ \cite{Thompson1984,Matzat1985a,Hoyden-Siedersleben1985, Hunt1986, Hoyden-Siedersleben-Matzat1986, Matzat-Zeh-Marschke1986, Matzat-Zeh-Marschke1987, Malle88,
Pahlings1988,Pahlings1989}; see \cite[II.9]{Malle-Matzat2018} for a detailed exposition.

Prior attempts to realize $M_{23}$ over $\Q$ resulted only in realizations over other fields:
\begin{itemize}
\item Hoyden-Siedersleben~\cite{Hoyden-Siedersleben1985} and H\"afner~\cite{Haefner1987} used regular $M_{24}$-extensions of $k(t)$ to construct regular $M_{23}$-extensions of $k(t)$ for $k=\Q(\sqrt{-23})$ and $k=\Q(\sqrt{-7})$, respectively.
\item Granboulan \cite{Granboulan1996} used another $M_{24}$-extension to construct a regular $M_{23}$-extension over $k(t)$ for any field $k$ over which a certain conic $C$ has a $k$-point.
Unfortunately, $C$ has no $\Q$-point.
\item Elkies \cite{Elkies2013} calculated the four degree~23 polynomials $P(x) \in \C[x]$ (up to affine equivalence) such that the Galois group of $P(x)-t$ over $\C(t)$ is $M_{23}$.
Each such $P(x)$ has coefficients in a degree~$4$ number field $F$, 
so one obtains a regular $M_{23}$-extension of $F(t)$.
\end{itemize}
For more information and explicit computation of Hurwitz spaces for $M_{23}$-covers, see \cite{Koenig2014}, \cite{Haefner-preprint}, and \cite{Seguin25}.

\subsection{Outline of our approach}

\begin{enumerate}[\upshape 1.]
\item The group $M_{23}$ has $17$ conjugacy classes, traditionally labeled by the order of a representative element followed by an upper case letter to distinguish classes of the same order.
The group $\GG_\Q \colonequals \Gal(\Qbar/\Q)$ acts on the set of conjugacy classes of $M_{23}$.
It fixes the class $2 \colonequals 2A$, and acts on $23A$ and $23B$ nontrivially through the quotient $\Gal(K/\Q)$, where $K \colonequals \Q(\sqrt{-23})$.
\item
Let $S$ be the finite \'etale $\Q$-scheme whose geometric points are $0,\sqrt{-23},-\sqrt{-23}$.
\item 
The Riemann existence theorem, together with a calculation in $M_{23}$, shows that there are exactly 
seven $M_{23}$-covers $Y_\C \to \PP^1_\C$ ramified only over $S$ with local monodromy $(2,23A,23B)$.
The quotient of each $Y_\C$ by the one-point stabilizer is a genus~$4$ curve $X_\C$.
\item
We compute these seven curves $X_\C$ numerically using the BelyiDB package of \cite{Musty-Schiavone-Sijsling-Voight2019},
and recognize coefficients as algebraic numbers to verify the output.
\item 
One might expect $\GG_\Q$ to act transitively on these seven.
Miraculously, instead $\GG_\Q$ has a fixed point! 
We do not have a conceptual explanation for this.
\item
The fixed point represents an $M_{23}$-cover $Y_\C \to \PP^1_\C$ with field of moduli $\Q$.
By \cite[Proposition 2.8(c)]{CoombesHarbater} 
or
\cite[Proposition~3.1(b) and Corollary~3.2]{DebesDouai}, 
it descends to an $M_{23}$-cover $Y \to \PP^1_\Q$. 
\end{enumerate}

Details will be given in the subsequent sections.

\subsection{Conclusions}

Let $k$ be a number field.
Call a collection of $G$-extensions of $k$ \defi{mutually independent} if, for any $r \ge 1$, any $r$ distinct extensions from the collection generate a $G^r$-extension of $k$.

\begin{corollary}
Let $G$ be a sporadic finite simple group.
Let $k$ be any number field.
\begin{enumerate}[\upshape (a)]
\item \label{I:sporadic over Q(t)}
    There exists a regular $G$-extension of $k(t)$.
\item \label{I:sporadic over Q}
    There exists a $G$-extension of $k$.
\item \label{I:linearly disjoint}
    There exists a $G$-extension of $\Q$ linearly disjoint from $k$.
\item \label{I:mutually independent}
    There exists an infinite collection of mutually independent $G$-extensions of $k$.
\end{enumerate}
\end{corollary}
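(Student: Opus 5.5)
The plan is to prove (a) first, derive (b) and (d) from (a) by Hilbert irreducibility, and obtain (c) by specializing over $\Q(t)$ instead of $k(t)$.

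For (a), let $G$ be a sporadic group. If $G \ne M_{23}$, the cited literature \cite{Thompson1984,Matzat1985a,Hoyden-Siedersleben1985,Hunt1986,Malle88,Pahlings1988,Pahlings1989} (see \cite[II.9]{Malle-Matzat2018}) gives a regular $G$-extension $L/\Q(t)$. If $G = M_{23}$, Theorem~\ref{T:over Q(t)} supplies it. Now take the compositum $Lk$ over $k(t)$. Since $L/\Q(t)$ is regular, $\Qbar \cap L = \Q$. Hence $L$ and $k(t)$ are linearly disjoint over $\Q(t)$, so $\Gal(Lk/k(t)) \isom \Gal(L/\Q(t)) \isom G$. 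Moreover $Lk$ is regular over $k$, because $L\Qbar$ has Galois group $G$ over $\Qbar(t)$; this is exactly the regularity hypothesis.

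Part (b) then follows from (a) by Hilbert's irreducibility theorem over the Hilbertian field $k$: for $t_0$ outside a thin set, the specialization of $Lk$ at $t=t_0$ is a $G$-extension of $k$.

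For (c), I would apply Hilbert irreducibility over $\Q$ to $L/\Q(t)$, but with an extra condition. Let $M$ be the Galois closure of $k$ over $\Q$. The set of $t_0 \in \Q$ for which the specialization $L_{t_0}$ has group $G$ and satisfies $L_{t_0}\cap M = \Q$ contains a Hilbert set. To see this, apply Hilbert irreducibility to the regular extension $LM(t)/M(t)$, which has group $G$. By the linear disjointness above, the compositum $LM(t)/\Q(t)$ has Galois group $G \times \Gal(M/\Q)$. Specializing so that the full group $G\times \Gal(M/\Q)$ is preserved forces $L_{t_0}$ and $M$ to be linearly disjoint over $\Q$, and therefore $L_{t_0}$ and $k$ are linearly disjoint as well. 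One could also use simplicity of $G$ directly: $L_{t_0}\cap M$ is Galois over $\Q$ with group a quotient of the nonabelian simple group $G$. It is also a subquotient of $\Gal(M/\Q)$. Knowing only that a quotient of a solvable group is solvable does not settle it, so I would just use the product argument.

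For (d), I would build the collection inductively. Suppose $E_1,\dots,E_r$ are already chosen, with compositum $E$ a $G^r$-extension of $k$. Apply Hilbert irreducibility to $LE(t)/E(t)$, which is regular with group $G$. This gives $t_0$ such that $E_{r+1}\colonequals (Lk)_{t_0}$ satisfies $\Gal(E E_{r+1}/E)\isom G$, so $EE_{r+1}$ is a $G^{r+1}$-extension of $k$. Any subcollection of a mutually independent family is still independent, since subgroups of the form $G^s$ arise as quotients.

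The only real obstacle is arranging that the specializations preserve the group over the larger base. This is handled by the standard fact that Hilbert sets of $E$ intersected with $k$ contain Hilbert sets of $k$. Everything else is formal once Theorem~\ref{T:over Q(t)} is available.
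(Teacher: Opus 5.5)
Your proof is correct and follows essentially the same route as the paper: part (a) is obtained exactly as you do, by combining Theorem~\ref{T:over Q(t)} with the earlier realizations over $\Q(t)$ and then taking the compositum with $k$. For (b)--(d) the paper simply cites \cite[\S3.3--3.4]{Serre1992} and \cite[\S10.1]{Serre1997}, and the Hilbert-irreducibility arguments you write out are the standard arguments found there: the product group $G \times \Gal(M/\Q)$ for linear disjointness, and inductive specialization over the compositum $E$ for mutual independence.
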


\begin{proof}
Part~\eqref{I:sporadic over Q(t)} over $\Q$ is the combination of Theorem~\ref{T:over Q(t)} and the work cited in Section~\ref{S:previous}.
Take a compositum to get \eqref{I:sporadic over Q(t)} over $k$.
For (\ref{I:sporadic over Q},\ref{I:linearly disjoint},\ref{I:mutually independent}) and other consequences of \eqref{I:sporadic over Q(t)}, see \cite[\S3.3--3.4]{Serre1992} and \cite[\S10.1]{Serre1997}.
\end{proof}

\section{The rigidity method}

\subsection{\texorpdfstring{$G$}{G}-covers of the projective line}

Let $G$ be a nonabelian finite simple group, so $G$ has trivial center.
Let $\Inn(G)$ be the group of inner automorphisms of $G$, so $\Inn(G) \isom G$, and $\Inn(G)$ acts on $G$.
The set $\calC \colonequals \Inn(G) \backslash G$ is the set of conjugacy classes of $G$.
For $g \in G$, let $[g] \in \calC$ be its class.
Let $n \ge 1$ be such that $g^n=1$ for all $g \in G$.

Let $k$ be a field of characteristic~$0$.
For each $m \ge 1$, let $\mu_m = \{x \in \kbar^\times : x^m=1\}$.
Let $\GG_k = \Gal(\kbar/k)$.
Let $\epsilon \colon \GG_k \to \Aut \mu_n \isom (\Z/n\Z)^\times$ be the cyclotomic character describing the $\GG_k$-action on $n$th roots of unity.
A choice of generator $\zeta \in \mu_n$ determines a bijection $\Hom(\mu_n,G) \isom \Hom(\Z/n\Z,G) = G$.
To make this bijection \emph{$\GG_k$-equivariant}, we let $\sigma \in \GG_k$ act trivially on the $G$ in $\Hom(\mu_n,G)$ and as $g \mapsto g^{\epsilon(\sigma)^{-1}}$ on the $G$ on the right.
Taking $\Inn(G)$-orbits, the latter action induces a $\GG_k$-action on $\calC$.

A \defi{curve} over $k$ is a $1$-dimensional smooth projective geometrically connected scheme of finite type over $k$.
A \defi{$G$-cover of $\PP^1_k$} is a (ramified) Galois morphism of curves $Y \to \PP^1_k$ together with an isomorphism $\Gal(k(Y)/k(\PP^1)) \isom G$.
An \defi{isomorphism of $G$-covers} $Y \to \PP^1_k$ and 
$Y' \to \PP^1_k$ is a $G$-equivariant isomorphism $Y \to Y'$ over $\PP^1_k$.

\subsection{Local monodromy}

Let $D \subset \C$ be the open unit disk.
Let $D^\times = D -\{0\}$.
We have $\Z \isomto \pi_1(D^\times)$ sending $1$ to the standard counterclockwise generator.

Suppose that $Y_\C \to \PP^1_\C$ is a $G$-cover with branch locus $S$.
For each $s \in S$, we have an analytic open embedding $D \to \PP^1_\C$ sending $0$ to $s$, 
such that $s$ is the only point of $S$ in the image;
ignoring basepoints, this induces 
\[
\Z \isom \pi_1(D^\times) \to \pi_1(\PP^1(\C)-S) \surjects G
\]
sending $1$ to a generator of the local monodromy group (inertia group) at $s$.
Changing basepoints conjugates the generator by an element of $G$,
so we obtain a well-defined conjugacy class for each $s$, hence a map $\cc \colon S \to \calC$, 
which we call the \defi{local monodromy}.

Algebraically, if $u$ is an analytic uniformizer at $s \in \PP^1(\C)$, 
the completion of $\C(\PP^1)$ at $s$ is $\C((u))$, and we have analogous homomorphisms involving \'etale fundamental groups
\[
\Zhat(1) \isom \pi_1^{\et}(\Spec \C((u))) \to \pi_1^{\et}(\PP^1_\C-S) \surjects G,
\]
factoring through $\mu_n$, defined up to conjugation by elements of $G$.
Thus we get a canonical map
$\cc \colon S \to \Inn(G) \backslash \Hom(\mu_n,G) \isom \calC$, where the last isomorphism is induced by the choice $\zeta \colonequals e^{2\pi i/n}$.

If $Y_\C \to \PP^1_\C$ is the base change of a $G$-cover $Y \to \PP^1_\Q$,
then $S$ may be viewed as a finite \'etale subscheme of $\PP^1_\Q$,
and $\cc \colon S(\Qbar) \to \calC$ is $\GG_\Q$-equivariant.
This is the branch cycle argument of Fried.

\subsection{Rigidity}
\label{S:rigidity}

See \cite{Fried-Voelklein1991}, especially Corollary~1, for details on the theory in this section.
Now fix a finite \'etale subscheme $S \subset \PP^1_\Q$ and a $\GG_\Q$-equivariant map $\cc \colon S(\Qbar) \to \calC$. 
Write $S(\C) = \{s_1,\ldots,s_r\}$, and $C_i \colonequals \cc(s_i)$ for $i=1,\ldots,r$.

Let $\calH$ be the $0$-dimensional Hurwitz scheme over $\Q$ parameterizing $G$-covers of $\PP^1$ unramified outside $S$ with local monodromy $\cc$.
Because $G$ has trivial center, $\calH$ is a fine moduli space.
Define 
\begin{align*}
   \bar{\Sigma}_\cc &\colonequals \{(g_1,\ldots,g_r) \in C_1 \times \cdots \times C_r : g_1 \cdots g_r = 1 \} \\
   \Sigma_\cc &\colonequals \{(g_1,\ldots,g_r) \in \bar{\Sigma}_\cc : \textup{$g_1,\ldots,g_r$ generate $G$}\} \\
   \Ni_\cc &\colonequals \Inn(G) \backslash \Sigma_\cc;
\end{align*}
here $g \in G \isom \Inn(G)$ acts on $r$-tuples by simultaneous conjugation.
The set $\Ni_\cc$ is called the \defi{Nielsen class}.
By the Riemann existence theorem, $\calH(\C)$ is in bijection with $\Ni_\cc$. 

The size $\lvert \bar{\Sigma}_\cc \rvert$ can be computed from the character table of $G$; an inclusion--exclusion gives $\lvert \Sigma_\cc \rvert$; and dividing by $|G|$ gives $\lvert \Ni_\cc \rvert$, which depends only on the multiset $\{C_1,\ldots,C_r\}$; see \cite[\S7.2 and \S7.3]{Serre1992} for an exposition.

We restrict to the case $r=3$.
If $\lvert \Ni_\cc \rvert = 1$, then the triple $(C_1,C_2,C_3)$ is called \defi{rigid};
in that case, $\calH \isom \Spec \Q$ and the unique point corresponds to a $G$-cover $Y \to \PP^1_\Q$ with local monodromy $\cc$.

\section{The Mathieu group \texorpdfstring{$M_{23}$}{M23}}

The Mathieu group $M_{23}$ is a finite simple group of order $10{,}200{,}960 = 2^7 \cdot 3^2 \cdot 5 \cdot 7 \cdot 11 \cdot 23$.
It is a $4$-transitive subgroup of $S_{23}$, generated by the permutations $g_1,g_2$ below.
It has 17 conjugacy classes:
\begin{center}
  1, \; 2, \; 3, \; 4, \; 5, \; 6, \; 7A, 7B, \; 8, \; 11A, 11B, \; 14A, 14B, \; 15A, 15B, \; 23A, 23B
\end{center}
(we drop the A in the label when there is a unique conjugacy class of elements of a particular order).
We calculate $\lvert \Ni_\cc \rvert$ for every $\GG_\Q$-stable $3$-element multiset in $\calC$,
using character tables as in Section~\ref{S:rigidity}.
The value of $\lvert \Ni_\cc \rvert$ is never $1$, so the rigidity method fails.

We next investigate the multiset for which $\lvert \Ni_\cc \rvert$ has smallest positive size.
This is $\{2,23A,23B\}$, with $\lvert \Ni_\cc \rvert = 7$.
In fact, $M_{23}$ is small enough that a computer can quickly list triples representing the elements of $\Ni_\cc$.
We fix the following such triple of permutations in $S_{23}$.\footnote{Our convention is that $S_{23}$ acts on the left on $\{1,\ldots,23\}$, but Magma has $S_{23}$ acting on the right.}
\begin{align*}
   g_1 &\colonequals (1,11)(2,23)(3,8)(4,16)(5,21)(7,20)(15,19)(18,22)  \\
   g_2 &\colonequals (1,2,11,10,16,9,6,3,23,19,20,14,21,17,4,8,22,5,18,15,13,7,12) \\
   g_3 &\colonequals (1,2,3,4,10,11,12,7,19,18,8,6,9,16,17,21,22,5,14,20,13,15,23). 
\end{align*}

Let $S$ be the finite \'etale $\Q$-scheme whose geometric points are $0,\sqrt{-23},-\sqrt{-23}$.
Let $\cc \colon S(\Qbar) \to \calC$ send these points to classes 2, 23A, 23B, respectively.
This $\cc$ is $\GG_\Q$-equivariant.
We have $(g_1,g_2,g_3) \in \Sigma_\cc$, representing a class in $\Ni_\cc$.

Let $Y_\C \stackrel{\phi}\to \PP^1_\C$ be the corresponding $M_{23}$-cover.
Let $X_\C$ be the (variety) quotient of $Y_\C$ by $\Stab_{M_{23}}(1) \le M_{23}$,
so $\phi$ factors as $Y_\C \to X_\C \stackrel{t}\to \PP^1_\C$.
Then $X_\C \stackrel{t}\to \PP^1_\C$ is a \emph{non-Galois} degree~$23$ cover branched over $S(\C)$,
and its Galois closure is $Y_\C \to \PP^1_\C$.

\begin{lemma}
    The genus of $X_\C$ is $4$.
\end{lemma}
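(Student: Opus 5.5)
We apply the Riemann--Hurwitz formula to the degree~$23$ cover $t \colon X_\C \to \PP^1_\C$. This cover is branched only over the three points of $S(\C)$. The fiber of $t$ over each branch point $s_i$ corresponds to the orbits of the cyclic group $\langle g_i \rangle$ acting on $\{1,\ldots,23\}$, because $X_\C = Y_\C/\Stab_{M_{23}}(1)$ and the fiber of $X_\C$ over a point is identified with the coset space $M_{23}/\Stab_{M_{23}}(1) \isom \{1,\ldots,23\}$. Under this identification, each point of the fiber over $s_i$ has ramification index equal to the length of the corresponding $\langle g_i\rangle$-orbit. So the total ramification contribution over $s_i$ is $\sum (e-1) = 23 - (\text{number of cycles of } g_i)$, where fixed points are counted as cycles of length~$1$.

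Next we read off the cycle types from the explicit permutations. The element $g_1$ is written as a product of $8$ disjoint transpositions, so it has $8$ two-cycles and $7$ fixed points, hence $15$ cycles, and it contributes $23-15=8$. Each of $g_2$ and $g_3$ is a single $23$-cycle, so each contributes $22$. It is worth confirming that $g_1$ moves exactly $16$ points, which is forced anyway: the class $2A$ of $M_{23}$ is the unique involution class, with cycle type $2^8 1^7$. The conclusion is independent of which element of $\Ni_\cc$ is chosen, since only the cycle types enter.

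Riemann--Hurwitz then gives
\[
2g(X_\C) - 2 = 23\cdot(-2) + 8 + 22 + 22 = 6,
\]
so $g(X_\C)=4$. Here $X_\C$ is connected because $M_{23}$ acts transitively on $\{1,\ldots,23\}$, so the genus is well defined. There is no real obstacle in this argument. The only points requiring care are the correspondence between cycle structure and ramification in the intermediate quotient and the correct count of the transpositions in $g_1$.
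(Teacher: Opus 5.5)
Your proposal is correct and is the same argument as the paper's: read off the ramification of $t$ from the cycle types $1^7 2^8$, $23$, $23$ of $g_1,g_2,g_3$, which gives a ramification divisor of degree $8+22+22$, and apply Riemann--Hurwitz to get $2g-2=-46+52=6$. Your remarks on the correspondence between $\langle g_i\rangle$-orbits and ramification points, and on connectedness, just make explicit what the paper leaves implicit.
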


\begin{proof}
There are three ramified fibers of $t$, one with eight points of ramification index $2$ and seven unramified points, and two that are totally ramified. 
The degree of the ramification divisor of $t$ is $8(2-1) + 2(23-1)$, so the genus of $X_\C$ is $4$ by the Riemann--Hurwitz formula.
\end{proof}

Model the hyperbolic plane by the open unit disk $D$.  
Let $q$ be the standard coordinate on $D$.  
Let $d(z,w)$ be the metric on $D$.
Given $\theta \in (0,\pi/4)$, define
\[
    a \colonequals e^{-i \theta} \sqrt{\tan\left(\frac{\pi}{4}-\theta\right)}, \qquad b \colonequals 0, \qquad c \colonequals \sqrt{\cos 2\theta}.
\]

\begin{lemma}
  The points $a,b,c$ are the vertices of a hyperbolic triangle in $D$ with corresponding interior angles $\pi/2$, $\theta$, $\theta$. 
\end{lemma}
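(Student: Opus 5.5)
We verify the three angles directly in the Poincaré disk model, using that hyperbolic geodesics through $b=0$ are Euclidean diameters, and that the other geodesics are arcs of circles orthogonal to the unit circle $\partial D$.

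\emph{Angle at $b=0$.} The geodesics from $0$ to $a$ and from $0$ to $c$ are Euclidean radii in the directions $\arg a = -\theta$ and $\arg c = 0$. Since $0<\theta<\pi/4$, we have $\tan(\pi/4-\theta)>0$ and $\cos 2\theta>0$, so $a$ and $c$ are well defined. We also need $|a|<1$ and $c<1$, which hold because $\tan(\pi/4-\theta)<1$ and $\cos 2\theta<1$. The disk model is conformal, so the angle at $b$ equals the Euclidean angle between these radii, namely $\theta$.

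\emph{The geodesic through $a$ and $c$.} It lies on a circle orthogonal to $\partial D$, which has the form $|q-p|^2=|p|^2-1$ with center $p$ satisfying $|p|>1$. Equivalently,
\[
|q|^2 - 2\operatorname{Re}(\bar p q) + 1 = 0 .
\]
Imposing this at $q=c$ gives $\operatorname{Re} p = (c^2+1)/(2c)$. Imposing it at $q=a$ gives a linear equation for $\operatorname{Im} p$. Solving these two equations determines $p$.

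\emph{Angle at $c$.} This angle is the angle between the real axis and the circle $|q-p|=\sqrt{|p|^2-1}$ at the point $c$. The tangent to that circle at $c$ is perpendicular to $c-p$. So the angle at $c$ is the complement of the angle that the radius $c-p$ makes with the real axis, and we need to show
\[
\tan(\text{angle at } c) = \frac{c-\operatorname{Re}p}{\operatorname{Im}p}\quad(\text{up to sign}) = \tan\theta .
\]

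\emph{Angle at $a$.} By the same reasoning, the angle at $a$ is the angle between the vectors $a-p$ and $a$, since the radius $0a$ is a diameter. It equals $\pi/2$ exactly when $\operatorname{Re}\bigl(\bar a (a-p)\bigr)=0$, that is, $|a|^2=\operatorname{Re}(\bar p a)$. Combined with the orthogonal-circle equation at $a$, namely $|a|^2-2\operatorname{Re}(\bar p a)+1=0$, this forces $|a|=1$, which is false. So this is not the right condition. The correct condition is that the circle's tangent at $a$ is perpendicular to the line $0a$, which means the center $p$ lies on the line through $0$ and $a$. Then $p=\lambda e^{-i\theta}$ with $\lambda$ real, and the orthogonality equation at $a$ gives $\lambda=(|a|^2+1)/(2|a|)$.

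\emph{Cleaner order of steps.} Define the candidate $p=\lambda e^{-i\theta}$ with $\lambda=(|a|^2+1)/(2|a|)$. This circle is orthogonal to $\partial D$, passes through $a$, and meets the diameter $0a$ at a right angle there, so the angle at $a$ is $\pi/2$. It remains to check two things:
\begin{enumerate}
\item The same circle passes through $c$, i.e. $c^2 - 2c\lambda\cos\theta + 1 = 0$.
\item The angle at $c$ is $\theta$.
\end{enumerate}
Write $s=\tan(\pi/4-\theta)=(1-\tan\theta)/(1+\tan\theta)$. Then
\[
\lambda=\frac{s+1}{2\sqrt s},
\]
and the condition in (1) becomes
\[
\frac{(1+\cos 2\theta)^2}{\cos 2\theta}=\frac{4\cos^2\theta\,(s+1)^2}{4s}.
\]
Both sides should reduce to $4\cos^4\theta/\cos 2\theta$ using $\cos 2\theta=\cos^2\theta-\sin^2\theta$. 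For this, note that $(s+1)^2/s = 4/\bigl((1+t)(1-t)\bigr)$ with $t=\tan\theta$, and $(1-t^2)\cos^2\theta=\cos 2\theta$.

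For the angle at $c$, an alternative to direct computation is hyperbolic trigonometry for a right triangle with angle $\theta$ at both $b$ and $c$. The relation $\cos B=\cosh(b\text{-side})\sin C$ shows the legs $ab$ and $ac$ are equal, and that $\cosh(bc)=\cot^2\theta$. Checking $\cosh d(0,c)=(1+c^2)/(1-c^2)=\cot^2\theta$ with $c^2=\cos2\theta$ then confirms the angle at $c$ without computing tangents.

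The main obstacle is the trigonometric simplification in these last two checks. I expect it to become routine once everything is expressed in terms of $t=\tan\theta$.
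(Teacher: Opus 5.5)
Your proof is correct, but it takes a different route from the paper's.

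\textbf{The paper's route.} It stays entirely within hyperbolic trigonometry. After reading off the angle $\theta$ at $b$ from $\arg a$ and $\arg c$, it computes the third side by the hyperbolic law of cosines, $\cosh d(a,c)=\cot\theta$. It then gets $\cos(\angle a)=0$ and $\cos(\angle c)=\cos\theta$ from the formula for the cosine of an angle in terms of the three side lengths.

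\textbf{Your route.} You use the Euclidean picture of geodesics in the disk model. The circle centered at $p=\lambda e^{-i\theta}$, with $\lambda=(|a|^2+1)/(2|a|)$, is orthogonal to $\partial D$ and meets the diameter through $a$ perpendicularly at $a$. Your identity (1) shows this circle passes through $c$; it is correct, since both sides equal $4\cos^4\theta/\cos 2\theta$. This explains the right angle geometrically, without any hyperbolic lengths. Afterwards, a single right-triangle identity plus the easy distance $\cosh d(0,c)=\cot^2\theta$ gives the angle at $c$. So you never need $d(a,c)$ or the law of cosines. The paper's method, by contrast, treats all three angles uniformly and does not depend on proving the right angle first.

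\textbf{Things to fix in the write-up.}
\begin{enumerate}
\item Delete the abandoned false start in the ``Angle at $a$'' paragraph.
\item Fix the direction of the last step. What you state is that an isosceles right triangle with base angles $\theta$ has $\cosh(\text{hypotenuse})=\cot^2\theta$, but you need the converse. Cite instead $\cosh d(b,c)=\cot(\angle b)\cot(\angle c)$, which holds in every triangle with a right angle at $a$. Together with $\angle b=\theta$, it forces $\angle c=\theta$. Alternatively, $\cos(\angle c)=\cosh d(a,b)\sin(\angle b)$ with $\cosh d(0,a)=(1+s)/(1-s)=\cot\theta$ gives $\cos(\angle c)=\cos\theta$ directly.
\end{enumerate}

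Your unfinished tangent computation would also have worked: by (1), $(c-\operatorname{Re}p)/\operatorname{Im}p=(1-c^2)/\bigl((1+c^2)\tan\theta\bigr)=\tan\theta$.
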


\begin{proof}
Note that $|a|^2 < 1$ and $|c|^2 < 1$, so $a,b,c \in D$.
Since $b=0$, we have $\angle b = \arg(c) - \arg(a) = \theta$.
The hyperbolic law of cosines gives $\cosh d(a,c) = \cot \theta$.
A formula for the cosines of the angles in terms of the side lengths of a hyperbolic triangle gives $\cos(\angle a) = 0$ and $\cos(\angle c) = \cos \theta$, so $\angle a = \pi/2$ and $\angle c = \theta$.
\end{proof}

Set $\theta \colonequals \pi/23$.
Let $\Delta$ be the triangle group generated by counterclockwise hyperbolic rotations by angles $2\pi/2$, $2\pi/23$, $2\pi/23$ around $a,b,c$: 
\[
\Delta \colonequals \langle \, \delta_a,\delta_b,\delta_c \mid \delta_a^2=\delta_b^{23}=\delta_c^{23}=\delta_a\delta_b\delta_c=1 \, \rangle.   
\]
Let $\psi \colon \Delta \to M_{23}$ be the surjection sending $\delta_a,\delta_b,\delta_c$ to $g_1,g_2,g_3$, respectively.
Let $\Gamma \colonequals \psi^{-1}(\Stab_{M_{23}}(1)) \le \Delta$.
Then $\Gamma \backslash D \to \Delta \backslash D$ is the desired cover 
$X_\C \stackrel{t}\to \PP^1_\C$ with monodromy given by $(g_1,g_2,g_3)$.
Let $a',b',c'$ be the images of $a,b,c$ under $D \surjects X_\C$.
The map $D \surjects X_\C$ is unramified at $b$ since the ramification indices of $b$ and $b'$ over $\PP^1$ are both $23$.
Thus $q$ corresponds to an analytic uniformizer at $b' \in X_\C$.

A basis of $\HH^0(X_\C,\Omega^1)$ pulls back to a $4$-tuple $(f_0 \, dq,\ldots,f_3 \, dq)$ for some $f_0,\ldots,f_3 \in \C[[q]]$.
Given $(g_1,g_2,g_3)$ and $n \ge 1$, the algorithm in \cite{Klug-Musty-Schiavone-Voight2014}, inspired by work of Hejhal~\cite{Hejhal1999} and Stark~\cite{Stark1984}, 
computes the first $n$ coefficients of each $f_i$ numerically.
The BelyiDB Magma package \cite{Musty-Schiavone-Sijsling-Voight2019} implements this algorithm.
We run it!
Below, we work numerically without controlling errors, so the intermediate steps of the calculation are not rigorous, but we return to rigor at the end by verifying algebraically that the claimed polynomials define $M_{23}$-extensions of $\Q$ and a regular $M_{23}$-extension of $\Q(t)$. 

We echelonize the basis.
This calculation shows that no nonzero $1$-form vanishes at $b'$ to order greater than $3$
(that is, $b'$ is not a Weierstrass point).
In other words, we obtain $f_i = q^i + O(q^4)$. 

\begin{lemma} 
The genus~$4$ curve $X_\C$ is non-hyperelliptic.
\end{lemma}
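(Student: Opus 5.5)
We will argue group-theoretically, not from the numerical power series, so that the statement is rigorous independently of the BelyiDB computation. Suppose $X_\C$ were hyperelliptic, with hyperelliptic involution $\iota$. The involution $\iota$ is canonical: it is the unique involution with quotient $\PP^1$, and it commutes with every automorphism of $X_\C$. The key issue is to relate $\iota$ to the cover $t\colon X_\C \to \PP^1_\C$, which has degree $23$.

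Consider the degree~$2$ map $h\colon X_\C\to\PP^1_\C$. Then $t$ and $h$ together give a morphism $X_\C \to \PP^1\times\PP^1$. The degrees $23$ and $2$ are coprime, so the compositum of the function fields $\C(t)$ and $\C(h)$ inside $\C(X_\C)$ has degree divisible by $\mathrm{lcm}(23,2)=46$ over each of them. It follows that $\C(X_\C)=\C(t,h)$, and the image of $X_\C$ in $\PP^1\times\PP^1$ is birational to $X_\C$ with bidegree $(2,23)$ (up to ordering). The arithmetic genus of a curve of bidegree $(2,23)$ is $(2-1)(23-1)=22$, which exceeds $4$, so this alone gives no contradiction. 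A sharper tool is needed.

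The cleaner route is through the monodromy group. Since $\iota$ is canonical, it is an automorphism of $X_\C$. I would then consider two cases.
\begin{enumerate}[(i)]
\item If $\iota$ commutes with $t$, i.e.\ $t\circ\iota=t$, then $\iota$ is a deck transformation of $X_\C\to\PP^1_\C$. Deck transformations of this cover correspond to $N_{M_{23}}(\Stab(1))/\Stab(1)$. This group is trivial, because the point stabilizer $M_{22}$ is self-normalizing in $M_{23}$: it is a maximal subgroup and is not normal. So case (i) is impossible.
\item Otherwise $t\circ\iota\ne t$. Then $\iota$ descends to nothing obvious, and I instead use the ramification point $b'$. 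Because $b'$ is totally ramified of index $23$, we have $t^{-1}(t(b'))=\{b'\}$, and likewise for $c'$.
\end{enumerate}

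The main obstacle is case (ii), and the most robust tool I expect to work is Castelnuovo--Severi. Apply it to $\C(t)$ and $\C(h)$, which generate $\C(X_\C)$ with degrees $23$ and $2$. The inequality gives
\[
g(X)\le 23\cdot 0+2\cdot 0+(23-1)(2-1)=22,
\]
which again is not enough. So I would fall back on the Weierstrass-point information already obtained. In the echelonized basis $f_i=q^i+O(q^4)$, the canonical map near $b'$ is $q\mapsto(1:q:q^2:q^3)+\cdots$, and these are four linearly independent power series. If $X_\C$ were hyperelliptic, all differentials would be pulled back via $h$. The ratios $f_1/f_0,\dots,f_3/f_0$ would then be powers of a single degree~$2$ function $x=h$, namely $f_i/f_0=x^i$. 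This forces $f_2f_0=f_1^2$ and $f_3f_0=f_1f_2$ identically.

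The check is therefore to verify, from the computed coefficients, that $f_0f_2-f_1^2\neq0$. One coefficient beyond $q^4$ suffices. Better still, show that no quadric relation has rank $\le 3$ of rational-normal type, which is the non-hyperelliptic canonical picture. This verification is numerical. As elsewhere in the paper, it becomes rigorous a posteriori once the exact canonical model of $X_\C$, an intersection of a quadric and a cubic in $\PP^3$, is recognized algebraically and checked to be a smooth complete intersection, which is never hyperelliptic.
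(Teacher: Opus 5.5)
Your group-theoretic attempts never reach a contradiction (the bidegree count and Castelnuovo--Severi only give $g\le 22$, and case (ii) is left open). So the proof rests entirely on your final numerical test, and that test is not valid. Hyperellipticity does not force $f_i/f_0=x^i$, and hence $f_0f_2=f_1^2$, for the echelonized basis. What it gives is that the canonical map factors through the double cover followed by a twisted cubic. The ratios $f_i/f_0$ equal $x^i$ only for the special basis $x^i\,dx/y$. The basis normalized by $f_i=q^i+O(q^4)$ differs from it by an invertible linear change, so the twisted cubic sits in different coordinates and $x_0x_2-x_1^2$ need not vanish on it.

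Concretely, take $y^2=x^{10}+4x+1$ at the point $(0,1)$, which is not a Weierstrass point, and take $q$ with $dq=dx/y$. Then $x=q+q^2+O(q^4)$. Echelonizing $x^i\,dq$ gives $f_0=1$, $f_1=q+O(q^4)$ and $f_2=x^2-2x^3=q^2-5q^4+O(q^5)$. Hence $f_0f_2-f_1^2=-5q^4+O(q^5)\ne 0$, although the curve is hyperelliptic, so checking one coefficient beyond $q^4$ proves nothing. Your fallback of excluding quadrics of rank $\le 3$ is not the right invariant either. The net of quadrics through a twisted cubic contains rank-$4$ members such as $x_0x_3-x_1x_2$, and a non-hyperelliptic genus-$4$ curve can lie on a quadric cone.

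The missing idea is a basis-independent invariant, which is exactly what the paper uses: the dimension of the space of quadratic forms in $x_0,\dots,x_3$ vanishing on $(f_0:\cdots:f_3)$. If $X_\C$ is hyperelliptic, the canonical image is a twisted cubic, which lies on a $3$-dimensional space of quadrics ($10-7$). Otherwise the canonical curve has degree $6$ and, by Noether's theorem, lies on a unique quadric ($10-9$). The computation gives $1$.

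Your closing remark is logically sound: if the canonical image lies in a smooth complete intersection $V(P,Q)$ of a quadric and a cubic, it equals that irreducible degree-$6$ curve and so is not a twisted cubic. But it depends on the same numerical vanishing data, so it does not make the lemma independent of the BelyiDB computation as you promised at the outset. Also, the paper only constructs the cubic $Q$ after the canonical model is known to be a complete intersection, so the dimension count is the natural and more direct check.
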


\begin{proof}
The dimension of the space of quadratic forms $P(x_0,\ldots,x_3)$ vanishing at $(f_0:\cdots:f_3)$ 
is $\binom{4-1}{2} = 3$ if $X_\C$ is hyperelliptic, and $\binom{4-2}{2} = 1$ otherwise \cite[Lemma~4.3]{Bommel-Costa-Poonen-Srinivasan2026}.
We compute that the dimension is $1$.
\end{proof}

The canonical model of a non-hyperelliptic curve of genus~$4$ is a complete intersection $P=Q=0$ in $\PP^3$, where $P$ and $Q$ are homogeneous of degrees $2$ and $3$, respectively.
To make $P$ unique, not just up to a scalar, we require that its leading coefficient be $1$ (with respect to some monomial order).
To make $Q$ unique, we require that $Q$ be orthogonal to $x_0 P,\ldots,x_3 P$ with respect to the Hermitian  pairing associated to the monomial basis and require $Q$ to have leading coefficient $1$.

Since $\lvert \Ni_\cc \rvert = 7$, the cover $t \colon X_\C \to \PP^1_\C$ associated to $(g_1,g_2,g_3)$ is defined over a number field $L$ of degree $\le 7$ over $\Q$.
We expanded the $1$-forms around the unique ramification point of $t \colon X_\C \to \PP^1_\C$ above $\sqrt{-23} \in \PP^1(L(\sqrt{-23}))$,
so the theory says that $P$ and $Q$ have coefficients in $L(\sqrt{-23})$.

Using the PSLQ algorithm \cite{PSLQ}, we recognize the coefficients of $P$ and $Q$ as algebraic numbers.
Miraculously, they lie in $K \colonequals \Q(\sqrt{-23})$! 
The exact $P$ and $Q$ define a canonically embedded genus~$4$ curve $X_K \subset \PP^3_K$.
Evaluating $(f_0:\cdots:f_3)$ at $a,b,c$ gives $a',b',c' \in X_K(\C) \subset \PP^3(\C)$
with $b' = (1:0:0:0)$, and PSLQ recognizes $c' \in X_K(K) \subset \PP^3(K)$. 

From now on, our calculations are algebraic.
We seek a morphism $t \colon X \to \PP^1_\Q$ which is ramified over $0$, $\sqrt{-23}$, $-\sqrt{-23}$, with ramification index $23$ above $\pm \sqrt{-23}$.
First we will construct a variant morphism $u$ ramified over $1,0,\infty$ instead.
A Riemann--Roch space computation produces $u \in K(X_K)$ such that $\divv(u) = 23b' - 23c'$.
We compute the zeros of $du$ other than $b'$ (the $8$ points with ramification index $2$)
and impose the condition that $u$ maps one of them to $1$, to obtain the correct scaling of $u$.
Let $\alpha \in \Aut(\PP^1_K)$ send $1,0,\infty$ to $0, \sqrt{-23}, -\sqrt{-23}$, respectively.
We compute $t = \alpha \circ u$.

We seek a second rational function $v \in K(X_K)$ such that $\Q(t,v)$ is the function field of a curve $X$ over $\Q$ whose base change is $X_K$.
Then the minimal polynomial of $v$ over $\Q(t)$ is the degree~$23$ polynomial whose splitting field is a regular $M_{23}$-extension of $\Q(t)$.

Let $\sigma$ be the generator of $\Gal(K/\Q)$, which acts also on $K(t)$, fixing $t$.
The $K$-vector space $L(5 b'-c')$ turns out to be $1$-dimensional, and a Riemann--Roch space computation produces a nonzero function $y$ in it.
We have $\deg y = 5$.
By linear algebra, we find $H(T,Y) \in K[T,Y]$ with $\deg_T H = 5$ and $\deg_Y H = 23$
such that $H(t,y)=0$.
We factor ${}^\sigma\! H(t,Y)$ over $K(X_K) = K(t,y)$ and find $y' \in K(X_K)$ such that $({}^\sigma\! H)(t, y')=0$.
This $y'$ is consistent with equaling ${}^\sigma \! y$ with respect to the intended $\Q$-structure on $X_K$.
Let $v=yy'$, which has polar divisor $4b'+4c'$ and is hence of degree $8$.
We next compute a polynomial $F(T,V) \in \Z[T,V] \subset \Q[T,V]$
such that $F(t,v)=0$, with $\deg_T F = 8$ and $\deg_V F = 23$;
to carry this out, we do linear algebra modulo small primes, and reconstruct coefficients using the Chinese remainder theorem.
Then $F(t,V) \in \Q(t)[V]$ is the desired degree~$23$ polynomial.
This polynomial is available on the GitHub site mentioned in the footnote at the end of Section~\ref{S:inverse Galois problem}.

\begin{remark}
In principle, we could also find a function $v$ of degree~$4$ instead of $8$,
which would reduce $\deg_T F$ to $4$.
To see this, let $K_X$ be a canonical divisor on the $\Q$-model $X$.
Then $K_X-(b'+c')$ is a divisor of degree $4$ defined over $\Q$.
Since $X$ is not hyperelliptic, $\dim L(b'+c')=1$,
so the Riemann--Roch theorem gives $\dim L(K_X-(b'+c'))=2$.
If $j_1,j_2$ is a $\Q$-basis, we can choose $v=j_1/j_2$.

In fact, degree~$4$ is best possible.
The only nonconstant rational maps $X_\C \to \PP^1_\C$ of degree $\le 3$ come from the rulings on the quadric $P=0$.
But the discriminant of $P$ is not a square in $K$,
so the rulings are not defined over $K$, let alone over $\Q$.
\end{remark}

Let $G_\geom$ and $G_\arith$ be the geometric and arithmetic monodromy groups of $F \in \Q(t)[V]$.
Thus $G_\geom \lhd G_\arith \le S_{23}$.

\begin{proposition} \label{PgeoM}
We have $G_\geom = M_{23}$. 
\end{proposition}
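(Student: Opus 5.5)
We need $G_\geom = M_{23}$ for the polynomial $F(t,V)$. Since we worked numerically, the identification of $\Q(t,v)$ with the function field of the cover $X_\C \to \PP^1_\C$ attached to $(g_1,g_2,g_3)$ is not yet rigorous. So the plan is to verify algebraically, from $F$ alone, that the geometric monodromy group is $M_{23}$.

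First, I would establish that $F(t,V)$ is irreducible over $\Qbar(t)$, so that $G_\geom$ is a transitive subgroup of $S_{23}$. One route is to check with Magma that $F$ is absolutely irreducible. Another is to observe that the curve $F=0$ has geometric genus $4$ and that $t$ has degree $23$ on it. Next, I would compute the discriminant of $F$ with respect to $V$ and confirm that its roots in $t$ are only $0,\pm\sqrt{-23}$ (and possibly $\infty$). Then I would determine the ramification above each branch point, for instance by computing Puiseux expansions of the roots of $F(t,V)$ near each branch point, or via places of the function field in Magma. The expected cycle types are $2^8 1^7$ over $0$ and a single $23$-cycle over each of $\pm\sqrt{-23}$, with $\infty$ unramified. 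By Riemann--Hurwitz this matches genus $4$, as in the earlier genus lemma. Consequently $G_\geom$ is generated by elements $h_1,h_2,h_3 \in S_{23}$ with $h_1h_2h_3=1$, of the cycle types above.

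Next, I would pin down the group. Since $G_\geom$ contains a $23$-cycle, it is transitive of prime degree $23$. By Burnside's theorem it is therefore either solvable, hence contained in $\mathrm{AGL}_1(\F_{23})$, or doubly transitive. The solvable case is impossible because $h_1$ is an involution fixing $7$ points, whereas nonidentity elements of $\mathrm{AGL}_1(\F_{23})$ fix at most one point. The known classification of transitive groups of degree $23$ (available in Magma's database) then leaves only $M_{23}$, $A_{23}$, and $S_{23}$ as candidates containing such elements. Since $h_1$ is a product of $8$ transpositions it is even, and $23$-cycles are even, so $G_\geom \le A_{23}$. It remains to rule out $A_{23}$.

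For this last step I would count Nielsen classes. A triple of cycle types $(2^81^7, 23, 23)$ generating $A_{23}$ with product $1$ gives genus $4$ just as well, so ramification data alone do not decide between $A_{23}$ and $M_{23}$. The obstacle is to certify the monodromy exactly. The most reliable approach is computational: have Magma compute $G_\geom$ directly, for example by computing the Galois group of $F$ over $\Q(t)$ and over $K(t)$ with certification. A cheaper route is to specialize $t$ to several rational values $t_0$, or to reduce modulo several primes $p$, and compute the Galois groups of the resulting polynomials $F(t_0,V)$. Each specialized group is a subquotient of $G_\arith$, and so this only bounds $G_\arith$ from below; it does not give an upper bound. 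Alternatively, one can show the group is not $A_{23}$ by exhibiting an invariant of $M_{23}$, such as a resolvent: the degree-$253$ action on pairs splits into orbits of different sizes for $M_{23}$. Concretely, I would factor a resolvent polynomial of $F$ attached to a $4$-set or pair action over $\Qbar(t)$, or over $K(t)$, and find a nontrivial factor. This shows that $G_\geom$ is a proper doubly transitive subgroup of $A_{23}$, hence $G_\geom = M_{23}$.
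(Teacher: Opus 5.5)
\textbf{Verdict: genuine gap.} Your lower bound is fine and is the paper's. The cycle types $1^72^8$ and $23$ force $G_{\geom}\in\{M_{23},A_{23},S_{23}\}$ by the classification of transitive groups of degree $23$. The gap is the upper bound, which is the real content of the proposition.

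Your concrete plan for excluding $A_{23}$ cannot work. $M_{23}$ is $4$-transitive, so it is transitive on the $253$ unordered pairs and on all $4$-subsets. Hence the pair and $4$-set resolvents of $F$ are irreducible over $\Qbar(t)$ whether the group is $M_{23}$ or $A_{23}$. The index-$253$ subgroup that distinguishes the two groups is the heptad stabilizer $2^4{:}A_7$, not the pair stabilizer. A subset resolvent that separates $M_{23}$ from $A_{23}$ needs $k$-subsets with $5\le k\le 18$. Examples are degree $\binom{23}{5}=33649$, where $M_{23}$ has an orbit of size $5313$, or degree $\binom{23}{7}=245157$, which contains the $253$ heptads of $S(4,7,23)$. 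Factoring resolvents of that size over $\Q(t)$ or $K(t)$ is not a realistic computation. Your other option, a certified Galois group computation for $F$ directly over $\Q(t)$, would suffice logically, but it defers to a computation you have not shown to be feasible.

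A smaller point: the discriminant of $F$ is $c\,t^8(t^2+23)^{88}h_{84}^2$, so your check that it vanishes only at $0,\pm\sqrt{-23}$ would fail as stated. Your parity conclusion $G_{\geom}\le A_{23}$ still holds, since this discriminant is a square in $\Qbar(t)$.

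The missing idea is the one you dismissed. Specializations and reductions bound $G_{\arith}$ only from below. However, reduction modulo a prime of good tame reduction preserves the \emph{geometric} monodromy group exactly, and that gives an upper bound for $G_{\geom}$. The paper proceeds as follows:
\begin{enumerate}
\item It takes $S'\colon t(t^2+23)h_{84}=0$, which contains every possible branch point in $\mathbb{A}^1$.
\item It checks that $\infty$ is unramified, that $31\nmid c$, that $S'$ stays \'etale mod $31$, and that $31>23$ gives tameness.
\item Grothendieck's specialization theorem for tame fundamental groups (SGA1, XIII.2) then gives $G_{\geom,31}=G_{\geom}$, so $M_{23}\subseteq G_{\geom}=G_{\geom,31}\subseteq G_{\arith,31}$.
\item A certified Galois group computation for $F \bmod 31$ over $\F_{31}(t)$, which is feasible, gives $G_{\arith,31}=M_{23}$ and closes the chain.
\end{enumerate}
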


\begin{proof} 
The local monodromy above each of $\pm \sqrt{-23}$ is a $23$-cycle,
and above $0$ it has cycle type $1^7 2^8$.
The classification of transitive groups of degree $23$ then implies that $M_{23} \subseteq G_{\geom}$.

We calculate that the discriminant of $F \in \Q(t)[V]$ factors as $c t^8 (t^2+23)^{88} h_{84}^2$ for some nonzero $c \in \Z$ and degree~$84$ irreducible polynomial $h_{84} \in \Z[t]$.
(The presence of $h_{84}(t)$ reflects singularities of the plane model $F(T,V)=0$, not actual ramification of $X \to \PP^1_\Q$; we omit the proof since our argument does not rely on this.)
Moreover, the extension defined by $F$ is unramified above $\infty$.
Let $S'$ be the subscheme of $\PP^1_\Z$ defined by $t(t^2+23)h_{84}=0$.
We check that $31 \nmid c$, and $S'$ remains \'etale modulo $31$.
Also, $F \bmod 31$ defines a tame extension of $\F_{31}(t)$ since $31>23$.
Let $G_{\geom,31}$ and $G_{\arith,31}$ be the geometric and arithmetic monodromy groups of $F \bmod 31$.
The specialization map on tame fundamental groups of $\PP^1 - S'$ is an isomorphism (\cite[XIII.2]{SGA1}; see also \cite[Th\'eor\`eme~4.4]{Orgogozo-Vidal2000}), so $G_{\geom,31}=G_{\geom}$.
Using Magma, we certify that $G_{\arith,31} = M_{23}$.
Thus $M_{23} \subseteq G_{\geom} = G_{\geom,31} \subseteq G_{\arith,31} = M_{23}$,
so all are equal.
\end{proof}

\begin{corollary}
\label{C:G_arith}
We have $G_\arith=M_{23}$.
\end{corollary}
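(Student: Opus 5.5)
We deduce the corollary from Proposition~\ref{PgeoM} together with group theory. Since $G_\geom \lhd G_\arith \le S_{23}$ and $G_\geom = M_{23}$, the group $G_\arith$ lies in the normalizer $N_{S_{23}}(M_{23})$. The first step is to show that $N_{S_{23}}(M_{23}) = M_{23}$. Conjugation gives a homomorphism $N_{S_{23}}(M_{23}) \to \Aut(M_{23})$. Its kernel is the centralizer $C_{S_{23}}(M_{23})$, which is trivial: a transitive group's centralizer in $S_n$ acts semiregularly, and it is isomorphic to $N_{M_{23}}(H)/H$ for a point stabilizer $H = M_{22}$. Since $M_{22}$ is maximal and $M_{23}$ is simple, $N_{M_{23}}(M_{22}) = M_{22}$, so this quotient is trivial. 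Finally, $\operatorname{Out}(M_{23})$ is trivial (a standard fact from the ATLAS), so $\Aut(M_{23}) = \Inn(M_{23})$. Together these give $N_{S_{23}}(M_{23}) = M_{23}$, hence $G_\arith = M_{23}$.

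A second argument avoids quoting $\operatorname{Out}(M_{23})=1$ and reuses the computation in the proof of Proposition~\ref{PgeoM}. Reduction modulo $31$ of a decomposition group relates $G_\arith$ to $G_{\arith,31}$. Concretely, the arithmetic monodromy group of $F \bmod 31$ is a subquotient of $G_\arith$, namely the decomposition group at a prime above $31$ modulo inertia. Since this route is messier, I would use it only as a consistency check. One can also argue directly: $G_\arith/G_\geom$ is the Galois group of the constant field extension $\ell/\Q$, where $\ell$ is the algebraic closure of $\Q$ in the splitting field. This quotient embeds in $N_{S_{23}}(M_{23})/M_{23}$, which is trivial by the first step.

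The only potential obstacle is justifying $\operatorname{Out}(M_{23}) = 1$. I would cite the ATLAS, or verify it in Magma by computing the normalizer of $M_{23}$ in $S_{23}$, which matches the paper's computational style. Everything else is formal.
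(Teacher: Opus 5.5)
Your first paragraph is the paper's argument: $G_\arith \le N_{S_{23}}(M_{23})$ because $G_\geom = M_{23} \lhd G_\arith$, and $M_{23}$ is self-normalizing in $S_{23}$. The paper simply asserts this last fact, and your justification of it via the trivial centralizer ($N_{M_{23}}(M_{22})/M_{22}=1$) and $\operatorname{Out}(M_{23})=1$ is correct. Do not present the mod-$31$ route as an alternative proof, though: the decomposition group at $31$ only realizes $G_{\arith,31}$ as a subquotient of $G_\arith$, which bounds $G_\arith$ from below, not from above.
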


\begin{proof}
By Proposition~\ref{PgeoM}, $G_\geom = M_{23}$.
The group $G_\geom$ is normal in $G_\arith$,
and $M_{23}$ is not normal in any larger subgroup of $S_{23}$.
\end{proof}

By Proposition~\ref{PgeoM} and Corollary~\ref{C:G_arith},
since $G_\geom = G_\arith$, the splitting field of $F$ over $\Q(t)$ is a regular extension
with Galois group $M_{23}$.  This proves Theorem~\ref{T:over Q(t)}.

Finally, we specialize $t$ to rational numbers to obtain degree~$23$ polynomials over $\Q$ with Galois group $M_{23}$.  Then we use the PARI/GP routines \texttt{polredabs} and \texttt{polredbest} \cite{Cohen1991, PARI} to find lower-height monic polynomials in $\Z[x]$ defining the same degree~$23$ number field.
This is how we produced the polynomial in Example~\ref{Ex:M23 over Q}.

\begin{example}
\label{Ex:M23 over Q again}
Here is another specialization, the one of lowest height we found so far:
\begin{align*} 
f(x) \colonequals x^{23} &+ 46 x^{21} - 598 x^{20} + 1679 x^{19} - 21620 x^{18} + 127420 x^{17}\\ 
&- 361974 x^{16} + 2223732 x^{15} - 9392096 x^{14} + 17344116 x^{13}\\ 
&-71999476 x^{12} + 320807726 x^{11} - 436105484 x^{10} + 83587888 x^9\\ 
&-2463757240 x^8 + 9451874955 x^7 - 5728074376 x^6 - 26037806834 x^5\\ 
&+63691532334 x^4 - 67357061907 x^3 + 38754121124 x^2\\ 
&-11217790920 x + 1243077066. 
\end{align*}
Its splitting field is an $M_{23}$-extension of $\Q$ unramified outside $\{2,7,23\}$.
We certified this with Magma as well.
\end{example}

\section*{Acknowledgments} 

We are very grateful to Michele Tarquini and Zhiyu Zhang for discussions when we were learning the theory and exploring approaches at the beginning of our project.
We began this research at the May 27--30, 2026 workshop ``AI and number theory'' at the American Institute of Mathematics in collaboration with the Institute for Computer-Aided Reasoning in Mathematics.
No text in this article was written by AI.
We used Claude Fable 5, Claude Opus 4.8, and ChatGPT 5.6 Sol for searching the literature, code generation, testing hypotheses, ruling out other approaches, devising computational strategies, checking our results, and proofreading our manuscript.
We thank Anthropic for providing each of us with a 3-month Claude Max subscription.
Our final results were verified in Magma and PARI/GP without the use of AI.

\bibliographystyle{amsalpha}
\bibliography{M23}

\end{document}